\newtheorem{theorem}{Theorem}
\newtheorem{lemma}{Lemma}
\def\({\left(}
\def\){\right)}
\begin{document}

\author{{\bf  E.\,V.~Shchepin }}
\thanks{ Steklov Math. Institute scepin@mi-ras.ru}
\thanks{This work was supported from a grant to the Steklov International Mathematical Center in the
framework of the national project "Science" of the Russian Federation..}

\title{About the Serpinsky-Knopp curve.}
 %DOI: 10.13140/RG.2.2.35487.71849

\date{}

\begin{abstract}
The Serpinsky-Knopp curve is characterized as the only curve (up to isometry) that maps a unit segment onto a triangle of a unit area,
so for any pair of points in the segment, the square of the distance between their images does not exceed four times the distance between them.
\end{abstract}

\paragraph{keywords} plane Peano curves,  square-to-linear ratio, locality
Sierpinski- Knopp curve, space-filling curves
\maketitle

The known Serpinsky-Knopp curve \cite{Sagan} maps the segment to an isosceles rectangular
 triangle in such a way that one half of the segment is mapped to half of the triangle, and the other half --- the other half.
In this case, any of the" halves " of the  curve is similar to the entire curve.
Therefore, each of the halves, in turn, is divided in half, so that the first quarter of the segment is displayed on the first quarter of the triangle, the second --- on the second, etc.

\begin{wrapfigure}[7]{r}{110pt}

\begin{tikzpicture}[scale=1,>=latex]
\filldraw[red](2,-0.5) node[scale=1,black]{};
\draw[line width=0.5pt](0,0)--(2,2); %вершина
 \draw[line width=0.5pt](2,2)--(4,0);
 \draw[line width=0.5pt](4,0)--(0,0);

 \draw[line width=0.5pt](2,2)--(2,0); % вертикальная центральная линия
 \draw[line width=0.5pt](1,1)--(3,1); % горизонтальная  цениральная линия
 \draw[line width=0.5pt](3,1)--(3,0);
 \draw[line width=0.5pt](1,1)--(1,0);

%\filldraw[red](1.5,1.5) circle (0.5pt);

 \draw[line width=0.5pt](1.5,1.5)--(3,0);

%\filldraw[red](2.5,1.5) circle (0.5pt);

\draw[line width=0.5pt](2.5,1.5)--(1,0);

 \draw[line width=0.5pt](1,0)--(0.5,0.5);  %  левая  малая
 \draw[line width=0.5pt](3,0)--(3.5,0.5);   % правая малая
 \draw[line width=0.5pt](2,0)--(1,1); % левая большая
 \draw[line width=0.5pt](2,0)--(3,1); % правая большая

% точки    для линии
 \filldraw[black](0.49,0.22) circle (0.5pt);

 \filldraw[](0.78,0.5) circle (0.5pt);

\filldraw[](1.22,0.5) circle (0.5pt);

\filldraw[](1.5,0.22) circle (0.5pt);

\filldraw[](1.78,0.5) circle (0.5pt);

\filldraw[](2.22,0.5) circle (0.5pt);

\filldraw[](2.5,0.22) circle (0.5pt);

\filldraw[](2.78,0.5) circle (0.5pt);

\filldraw[](3.5,0.22) circle (0.5pt);

\filldraw[](3.22,0.5) circle (0.5pt);

\filldraw[](1.5,0.75) circle (0.5pt);

\filldraw[](2.5,0.75) circle (0.5pt);

\filldraw[](1.5,1.22) circle (0.5pt);

\filldraw[](2.5,1.22) circle (0.5pt);

\filldraw[](2.22,1.5) circle (0.5pt);

\filldraw[](1.78,1.5) circle (0.5pt);

 \draw[line width=0.5pt](0.49,0.22)--(0.78,0.5);
 \draw[line width=0.5pt](0.78,0.5)--(1.22,0.5);
 \draw[line width=0.5pt](1.22,0.5)--(1.5,0.22);
 \draw[line width=0.5pt](1.5,0.22)--(1.78,0.5);
 \draw[line width=0.5pt](1.78,0.5)--(1.5,0.75);
 \draw[line width=0.5pt](1.5,0.75)--(1.5,1.22);
 \draw[line width=0.5pt](1.5,1.22)--(1.78,1.5);
 \draw[line width=0.5pt](1.78,1.5)--(2.22,1.5);
 \draw[line width=0.5pt](2.22,1.5)--(2.5,1.22);
 \draw[line width=0.5pt](2.5,1.22)--(2.5,0.75);
 \draw[line width=0.5pt](2.5,0.75)--(2.22,0.5);
 \draw[line width=0.5pt](2.22,0.5)--(2.5,0.22);
 \draw[line width=0.5pt](2.5,0.22)--(2.78,0.5);
 \draw[line width=0.5pt](2.78,0.5)--(3.22,0.5);
 \draw[line width=0.5pt,->](3.22,0.5)--(3.5,0.22);

\end{tikzpicture}
\end{wrapfigure}

The halving process continues indefinitely, resulting in a continuous mapping of the unit
segment $s\colon [0,1]\to \Delta$ onto a triangle, such that the image of any interval of type
$[\frac{k-1}{2^n},\frac{k}{2^n}]$, where $k\le 2^n$ are natural numbers, is a triangle,
 similar to the original with a similarity coefficient of $2^{n/2}$.
The segment $[\frac{k}{2^n}, \frac{k+1}{2^n}]$, following \cite{Sch2004}, is called \emph{fractal period} of the curve $s(t)$, and the restriction of the curve to a fractal period --- \emph{by the fraction of the order $n$} of this curve.
Different fractions of the same order are isometric with each other.
In figure 1, the broken line shows the order of passage of the fourth-order fractions of the Serpinsky-Knopp curve.

In the future, we will treat the variable $t$ as time.
The square-linear ratio (SLR) of a pair of points $p (t_1), p (t_2)$ of a plane curve is called, according to \cite{Sch2004}, the ratio of the square of the Euclidean distance between their images to the time interval between them:
\begin{equation}\label{slr}
  \frac{|p(t_2)-p(t_1)|^2}{t_2-t_1}
\end{equation}

The similarity of fractions preserves the square-linear ratio.
The exact upper bound of a SLR  pair of curve points is called the \emph{locality} of the curve.
Among all currently known curves that map a unit segment to a flat set of unit area, the Serpinsky-Knopp curve has the smallest locality(see\cite{Hav2}).
The fact that this curve has locality 4 is proved in the article \cite{Nie}.
We will give a simpler proof here.
\begin{lemma}\label{max}
The maximum SLR of the $s(t)$ curve is reached.
\end{lemma}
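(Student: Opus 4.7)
The plan is a compactness argument on the SLR function, enhanced by a self-similar rescaling step that handles maximizing sequences collapsing to the diagonal.

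First I would check that $f(t_1,t_2):=|s(t_2)-s(t_1)|^2/(t_2-t_1)$ is bounded, so that $M:=\sup f<\infty$. Using the self-similarity already recalled, any pair with $t_2-t_1\le 2^{-n}$ lies within at most two adjacent fractions of order $n$ whose images have diameter $O(2^{-n/2})$; hence $|s(t_2)-s(t_1)|^2=O(t_2-t_1)$. Take a maximizing sequence $(t_1^{(k)},t_2^{(k)})$; compactness of $[0,1]^2$ gives a convergent subsequence with limit $(t_1^*,t_2^*)$. Since $f$ is continuous on the open set $\{t_1<t_2\}$, the case $t_1^*<t_2^*$ settles the lemma at once, because $f(t_1^*,t_2^*)=M$.

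If instead $t_1^*=t_2^*$, I would use self-similarity to ``zoom in''. For each $k$ pick the smallest fraction $F_k$ containing both $t_1^{(k)}$ and $t_2^{(k)}$, with affine bijection $\phi_k\colon[0,1]\to F_k$. Since $s\circ\phi_k=\psi_k\circ s$ for some planar similarity $\psi_k$, the rescaled pair $\tilde t_i^{(k)}:=\phi_k^{-1}(t_i^{(k)})$ has exactly the same SLR as the original. Minimality of $F_k$ forces $\tilde t_1^{(k)}\le\tfrac12\le\tilde t_2^{(k)}$, and a second compactness extraction gives a limit $(\tilde t_1^*,\tilde t_2^*)$ with $\tilde t_1^*\le\tfrac12\le\tilde t_2^*$. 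Strict inequality here lets continuity finish the proof.

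The main obstacle is the residual subcase $(\tilde t_1^{(k)},\tilde t_2^{(k)})\to(\tfrac12,\tfrac12)$, in which the pair straddles $\tfrac12$ with vanishing separation. No proper fraction contains $\tfrac12$ in its interior, so the ``smallest containing fraction'' rescaling cannot iterate, and one is forced into a direct estimate of $f$ near this point. Writing the pair as $(\tfrac12-\alpha_k,\tfrac12+\beta_k)$, both images $s(\tfrac12-\alpha_k)$ and $s(\tfrac12+\beta_k)$ lie in two similar triangles of diameter $O(\sqrt{\max(\alpha_k,\beta_k)})$ sharing $s(\tfrac12)$ as a common vertex; using the explicit recursive placement of their remaining vertices, one checks that the limiting SLR along such sequences coincides with the SLR at a genuine nondegenerate pair (for instance the endpoints $(0,1)$, which give $|s(1)-s(0)|^2=4$). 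Hence no maximizing sequence can truly collapse, and the supremum is attained on $\{t_1<t_2\}$.
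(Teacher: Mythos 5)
Your overall strategy (boundedness, maximizing sequence, compactness, and an SLR-preserving rescaling to the smallest dyadic fraction containing the pair, which forces the rescaled pair to straddle $\tfrac12$) is sound, and it is in the spirit of the argument the paper invokes by citing Shchepin's 2015 paper; the paper itself gives no details beyond that citation. The genuine gap is in your residual case, where the rescaled pair collapses to $(\tfrac12,\tfrac12)$. There you assert that ``using the explicit recursive placement of their remaining vertices, one checks that the limiting SLR along such sequences coincides with the SLR at a genuine nondegenerate pair'' such as $(0,1)$ with value $4$. This is exactly the hard point, and it is not a routine check: what you need is the sharp asymptotic bound $\limsup f\le 4$ for pairs straddling $\tfrac12$, and the naive estimate available from your setup (left point in a triangle of diameter $O(\sqrt{\alpha_k})$ with vertex $s(\tfrac12)$, right point in one of diameter $O(\sqrt{\beta_k})$, triangle inequality) only yields a constant $16$, not $4$. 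Getting $4$ here essentially requires the same angle/geometry analysis (Lemma \ref{ugol} plus the structure of the fractions at the right-angle vertex) that the paper deploys to prove locality $4$ \emph{after} this lemma, so as written your last step either begs a substantial part of the main estimate or is simply unproved.

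There is a clean way to close the gap without any estimate, and it stays within your own renormalization philosophy: the two-sided germ of $s$ at $t=\tfrac12$ is exactly self-similar. Concretely, $s(\tfrac12)$ is the right-angle vertex $B$ of the image triangle, the fractions $[\tfrac12-2^{-n},\tfrac12]$ and $[\tfrac12,\tfrac12+2^{-n}]$ fill triangles with an acute vertex at $B$, and the homothety $h$ centered at $B$ with ratio $\tfrac12$ satisfies $s(\tfrac12\pm\tfrac u4)=h\bigl(s(\tfrac12\pm u)\bigr)$ for $u\in[0,\tfrac14]$ (this follows from the uniqueness of the Sierpinski--Knopp filling of a triangle with prescribed ordered acute endpoints). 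Since $h$ scales squared distance by $\tfrac14$ and the time map scales time by $\tfrac14$, this renormalization preserves SLR; applying its inverse finitely many times to a pair $(\tfrac12-\alpha_k,\tfrac12+\beta_k)$ brings $\max(\alpha_k,\beta_k)$ into a fixed interval such as $(\tfrac1{16},\tfrac14]$, so the renormalized maximizing sequence has $t_2-t_1$ bounded away from zero, and one more compactness-plus-continuity extraction yields attainment. Replacing your final paragraph by this argument makes the proof complete; as it stands, the decisive case rests on an unverified sharp claim.
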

\begin{proof}
 Similar to the proof of theorem 1 of \cite{Sch2015}.
\end{proof}

\begin{lemma}\label{ugol} If the angle $\angle p(t_1)p(t_2)p(t_3)$ is not obtuse and $t_1<t_2<t_3$, then SLR pair $p(t_1) p(t_3)$ is not less than the maximum of the SLR pairs $p(t_1) p(t_2)$ and $p(t_2) p(t_3)$. And if the angle is sharp, it is less.
\end{lemma}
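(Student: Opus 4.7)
The plan is to reduce the lemma to a single application of the law of cosines at the middle vertex, combined with an elementary weighted--average inequality. Introduce the notation $a=|p(t_2)-p(t_1)|$, $b=|p(t_3)-p(t_2)|$, $c=|p(t_3)-p(t_1)|$, $\tau_1=t_2-t_1$, $\tau_2=t_3-t_2$, and $\theta=\angle p(t_1)p(t_2)p(t_3)$. The three SLRs in question are $a^2/\tau_1$, $b^2/\tau_2$ and $c^2/(\tau_1+\tau_2)$, and it is convenient to set $M=\max\!\left(a^2/\tau_1,\,b^2/\tau_2\right)$.

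The angular hypothesis enters only through the law of cosines at $p(t_2)$,
\[
c^2 \;=\; a^2+b^2-2ab\cos\theta ,
\]
which turns it into a purely metric statement: if $\theta$ is not obtuse then $\cos\theta\ge 0$, so $c^2\le a^2+b^2$, while if $\theta$ is strictly acute (with $a,b>0$, automatic from $t_1<t_2<t_3$ on the curve) the inequality is strict. So the entire content of the hypothesis is the metric comparison of $c^2$ with $a^2+b^2$.

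I would then combine this with the trivial bound $a^2+b^2\le M(\tau_1+\tau_2)$, which follows from $a^2\le M\tau_1$ and $b^2\le M\tau_2$ by the very definition of $M$. Chaining the two steps yields
\[
\frac{c^2}{\tau_1+\tau_2}\;\le\; M ,
\]
which is the required comparison between the SLR of the full pair and the larger of the two half-pair SLRs; the inequality is strict whenever $\theta<\pi/2$, the strictness coming entirely from the law-of-cosines step.

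I do not foresee any substantive obstacle: the whole proof is one identity and two elementary inequalities. The only point worth verifying is the conventional reading of $\angle p(t_1)p(t_2)p(t_3)$ as the interior angle of the triangle at the middle vertex, so that the sign of $\cos\theta$ matches the stated hypothesis; after that the argument is automatic, and the two sources of the chain (the geometric one in the law of cosines, the arithmetic one in the weighted-average bound) neatly separate the strict and non-strict parts of the conclusion.
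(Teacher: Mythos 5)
Your argument is correct and is essentially the paper's own proof: the paper reduces the angle hypothesis to the Pythagorean comparison of $|p(t_1)-p(t_3)|^2$ with $|p(t_1)-p(t_2)|^2+|p(t_2)-p(t_3)|^2$ and leaves implicit exactly the weighted-average (mediant) step that you spell out. Note that what you actually prove --- and what is true, consistent with the lemma's second sentence, and what is used in the two later applications --- is that the SLR of the pair $p(t_1)\,p(t_3)$ is not \emph{greater} than the maximum of the two sub-pair SLRs; the printed wording ``not less than'' (and the direction of the displayed inequality in the paper's proof, which as written corresponds to a non-acute rather than non-obtuse angle) is evidently a translation slip, so the reading you adopted is the right one.
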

\begin{proof}
Indeed, the condition for an angle is equivalent to an inequality
\begin{equation*}
  |p(t_1)-p(t_2)|^2+|p(t_2)-p(t_3)|^2\le |p(t_3)-p(t_1)|^2,
\end{equation*}
\end{proof}

\begin{proof}[Proof that the locality of $s(t)$ is equal to four]
Consider a pair of points $s(t)$ $s (t')$ of the  Sierpinski-Knopp curve
 with a maximum SLR. Let the points $A$ and $C$ be the ends of the fraction
the largest order containing this pair. Then $s(t)$ and $s(t')$ lie in different
the halves of this faction. The middle of the fraction is $B$. According to
the lemme \ref{ugol} angle $\angle s(t)Bs (t')$ must be straight, and
The SLR of both pairs $s(t) B$ and $Bs(t')$ is also maximum. Thus
$s(t)$ lies on the side of the fraction, and $B$ is its vertex. Consider
the largest-order fraction containing the pair $s(t), B$. In that case
$s(t)$ lies on the first half of the hypotenuse of this fraction. Since
the rectangular vertex $B'$ of this fraction passes between $s(t)$, then
the angle $\angle s(t), B'b$ must be straight. And this is only possible
if $s(t)$ is the vertex of the fraction.
 But in this case, the SLR of the $s(t)$ $s(t')$ pair is equal to four.
\end{proof}

The main result of the article is as follows:
\begin{theorem}
  \label{knopp}
There is a unique, up to isometry, mapping of a unit segment onto a triangle of a unit area whose locality is four.
\end{theorem}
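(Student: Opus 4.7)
The plan is to show that any curve $p:[0,1]\to T$ satisfying the hypotheses coincides, up to isometry, with the Sierpinski-Knopp curve $s$, by reducing to the same dyadic rigidity analysis that underlies the locality-$4$ proof above.

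First, I would reprove Lemma~\ref{max} in this setting to obtain a pair $(t_*,t^*)$ realizing the maximum SLR; the value must equal~$4$, since $T$ has positive diameter while the locality is at most~$4$. Then, following the midpoint argument of the locality-$4$ proof, Lemma~\ref{ugol} forces the image of the midpoint $p((t_*+t^*)/2)$ to lie at a right angle relative to $p(t_*)$ and $p(t^*)$, and forces both half-intervals to realize SLR~$4$ as well. Iterating this doubling on each half produces a complete dyadic self-similar structure on $[t_*,t^*]$: at dyadic level $n$, each of the $2^n$ subintervals of length $2^{-n}(t^*-t_*)$ has image a right isosceles triangle of area $2^{-n}(t^*-t_*)$, positioned exactly as the corresponding Sierpinski-Knopp fraction. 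Hence $p$ restricted to $[t_*,t^*]$ is isometric to a rescaled Sierpinski-Knopp curve whose image is a right isosceles triangle $R\subseteq T$ of area $t^*-t_*$.

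The heart of the theorem lies in the next step: showing $(t_*,t^*)=(0,1)$, which in particular yields $R=T$. Suppose to the contrary $t^*-t_*<1$. Then $p([0,t_*])\cup p([t^*,1])$ must cover the positive-area set $T\setminus R$. By locality~$4$, the first arc lies in the disk of radius $2\sqrt{t_*}$ around $p(t_*)$, and the second in the disk of radius $2\sqrt{1-t^*}$ around $p(t^*)$; and $p(t_*),p(t^*)$ are the two hypotenuse endpoints of $R$. A case analysis on the possible positions of the right isosceles sub-triangle $R$ inside the arbitrary triangle $T$ of larger area, combined with the diameter-and-area bookkeeping above, rules out every configuration other than $R=T$, forcing $t^*-t_*=1$. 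Once this is in hand, the dyadic self-similarity yields a canonical bijection between the dyadic-rational points of $[0,1]$ and their images under $p$ versus $s$, which extends by continuity to the required isometry of $T$ onto the standard Sierpinski-Knopp triangle conjugating $p$ with $s$.

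The principal obstacle I anticipate is the extremal-pair step: ruling out that the maximum SLR is realized by some interior pair, which is what pins down both the shape of $T$ as right isosceles and the identification with $s$. A secondary delicate point is the verification that the midpoint doubling produces both a right angle and equal sub-SLRs (not merely a non-acute angle); this uses that the locality bound is saturated precisely, so that any slack in the midpoint sub-SLRs would be incompatible with the equality $|p(t_*)-p(t^*)|^2=4(t^*-t_*)$ together with the Pythagorean relation extracted from Lemma~\ref{ugol}.
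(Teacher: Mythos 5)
There is a genuine gap, in fact two, and both sit exactly at the places you flag as ``delicate.'' First, the midpoint-doubling step does not work as claimed. If the pair $(t_*,t^*)$ has SLR equal to the locality $4$, Lemma~\ref{ugol} only excludes an \emph{acute} angle at an intermediate point $p(m)$, $m=\frac{t_*+t^*}{2}$ (an acute angle would make one of the inner SLRs exceed $4$). It does not force a right angle, nor does it force the two halves to have SLR $4$: by the law of cosines one can have an obtuse angle at $p(m)$ with $|p(t_*)-p(m)|^2<2(t^*-t_*)$ and $|p(m)-p(t^*)|^2<2(t^*-t_*)$ while still $|p(t_*)-p(t^*)|^2=4(t^*-t_*)$; locality only supplies upper bounds on the half-pairs, so there is no ``Pythagorean relation'' for the slack to contradict. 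Consequently the asserted dyadic self-similar structure on $[t_*,t^*]$, and with it the conclusion that $p|_{[t_*,t^*]}$ is a rescaled Sierpinski--Knopp curve with triangular image $R$, is unsupported. The paper gets dyadic rigidity by a different mechanism that you omit: surjectivity. Lemma~\ref{treug} (locality applied to the passage times of the three vertices of the image triangle) forces the triangle to be right isosceles with hypotenuse $2$; Lemma~\ref{median} pins $p(0),p(1),p(\frac12)$; and in the induction step the right-angle vertex $B$ of each fraction is forced into $p[\frac{k}{2^n},\frac{k+1}{2^n}]$ because, by Lemma~\ref{ugol}, every period's $p$-image lies in the disk over the segment joining its (inductively known) endpoints, all periods but four are too far from $B$, $B$ is not interior to the union of the other three disks, and the curve must nevertheless cover points arbitrarily close to $B$. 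Without an appeal to surjectivity at this stage, knowing the values of $p$ at some dyadic times gives no control over where the intermediate images sit.

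Second, the step ``$(t_*,t^*)=(0,1)$'' cannot be proved by any case analysis from what you have, because it is not forced: maximal pairs are far from unique. For the Sierpinski--Knopp curve itself the endpoints of \emph{every} fraction form a pair with SLR exactly $4$, so a maximal pair with, say, $t^*-t_*=\frac12$ and $R$ equal to half the triangle is fully consistent with all your hypotheses; assuming $t^*-t_*<1$ therefore leads to no contradiction, and the disk-radius and area bookkeeping you describe is satisfied by this configuration. What actually pins the global picture in the paper is the step you skip: the locality bound applied to the three vertices of $T$ gives $a^2+b^2\le 4\ge c^2$, hence (Lemma~\ref{treug}) $T$ is right isosceles with hypotenuse $2$, and then Lemma~\ref{median} shows that the pair $(0,1)$ itself realizes SLR $4$ with $p(\frac12)$ at the right-angle vertex. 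You need an argument of this kind about the specific pair $(0,1)$ (equivalently, about the vertices of $T$), not about an arbitrary extremal pair, before any identification with the Sierpinski--Knopp curve can start.
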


The proof of the theorem is preceded by several lemmas.
\begin{lemma}
\label{treug}
If a triangle of unit area has sides $a, b, c$ that satisfy the inequalities
  $c^2\le4\ge a^2+b^2$, then $a=b=\sqrt2$, $c=2$.
\end{lemma}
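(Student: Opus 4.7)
The plan is to squeeze the inequalities between an area lower bound and the hypothesis, forcing equality everywhere.

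First, I would introduce $\gamma$ as the angle opposite side $c$, so that the area condition reads $\tfrac12 ab\sin\gamma = 1$, that is $ab\sin\gamma = 2$. Since $\sin\gamma \le 1$, this immediately gives $ab \ge 2$.

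Next, I would apply the AM--GM inequality in the form $a^2 + b^2 \ge 2ab$ to conclude
\begin{equation*}
a^2 + b^2 \ge 2ab \ge 4.
\end{equation*}
Combining this with the hypothesis $a^2 + b^2 \le 4$ forces equality throughout the chain. Equality in AM--GM gives $a = b$; equality $ab = 2$ forces $\sin\gamma = 1$, i.e.\ $\gamma = \pi/2$. Together these yield $a = b = \sqrt{2}$.

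Finally, since $\gamma$ is a right angle and $c$ is opposite to it, the Pythagorean theorem (or the law of cosines with $\cos\gamma = 0$) gives $c^2 = a^2 + b^2 = 4$, so $c = 2$; the remaining hypothesis $c^2 \le 4$ is consistent (it is saturated). There is no real obstacle here: the proof is essentially a single tight AM--GM sandwich. The only thing to be careful about is the role assigned to each side — one must apply AM--GM to the two sides $a,b$ adjacent to the right angle opposite $c$, and this is exactly the pairing the hypothesis has already singled out.
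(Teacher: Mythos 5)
Your proof is correct, and it takes a genuinely different route from the paper. The paper argues by an extremal principle: among all triangles satisfying $c^2\le 4$ and $a^2+b^2\le 4$ it considers one of maximal area, argues that for it both constraints must be saturated ($c=2$, $a^2+b^2=4$) and that it must be isosceles (otherwise replacing it by the isosceles triangle with the same base $c$ and the same height keeps the area but makes $a_1^2+b_1^2<4$ strictly, contradicting maximality), so the maximal area is $1$, attained only by the right isosceles triangle, and every other admissible triangle has area strictly below $1$. That argument is geometric but terse — it tacitly uses the existence of a maximizer and leaves the "obviously" steps to the reader. Your argument instead is a direct equality-forcing chain: from $\tfrac12 ab\sin\gamma=1$ you get $ab\ge 2$, then $a^2+b^2\ge 2ab\ge 4$ collides with the hypothesis $a^2+b^2\le 4$, forcing $a=b=\sqrt2$, $\sin\gamma=1$, and $c=2$ by the law of cosines. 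This is more elementary and fully rigorous as written, needs no compactness or symmetrization, and in fact shows slightly more than the paper states: the hypothesis $c^2\le 4$ is never used, so it is redundant — the conclusion already follows from unit area and $a^2+b^2\le 4$ alone (with $a,b$ being the pair singled out by that inequality). What the paper's variational phrasing buys in exchange is the explicit comparison statement it reuses implicitly in spirit elsewhere — that every non-right, non-isosceles admissible triangle has area strictly less than $1$ — but for the lemma as stated your sandwich argument suffices.
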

\begin{proof}
Consider a triangle
with the sides $a\le b\le c$, which satisfies the inequalities of the Lemma and has the maximum area.
Then, for him, the Lemma inequalities will obviously reverse the equality $c=2$, $a^2+b^2=4$.
 Moreover, this triangle will have equal sides $a=b$.
Indeed, if $a\ne b$, then the same height isosceles triangle with the base $c$ and the sides $a_1, b_1$ has the same area
and satisfies the strict inequality $a_1^2+b_1^2<4$, and therefore cannot have a maximum area.
Where do we get that the maximum area of a triangle satisfying the Lemma's inequalities is a rectangular isosceles,
and other triangles that satisfy the Lemma's inequalities have a smaller area.
\end{proof}

\begin{lemma}\label{median}

If the curve $p(t)\colon [a, b]\to R^2$ of locality 4 passes through all three vertices of the rectangular isosceles triangle $ABC$, whose area is equal to the length of the segment, then the beginning and end of the segment $[a,b]$ pass to the sharp-angled vertices $A$, $B$
a triangle, and the middle of the segment --- into its rectangular vertex $C$.
\end{lemma}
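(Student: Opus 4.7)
The plan is to translate the locality-4 condition into three explicit inequalities on the three pairs of pre-image times, and then observe that they are all forced to be equalities by an area/length count.

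First I would record the metric data of the triangle. Since the triangle is right isosceles of area equal to the length $L=b-a$ of the parameter segment, its legs have length $\sqrt{2L}$ and its hypotenuse has length $2\sqrt{L}$. In particular the two acute-angled vertices $A,B$ are the endpoints of the hypotenuse, so $|AB|^2=4L$, while $|AC|^2=|BC|^2=2L$. Denote by $t_A,t_B,t_C\in[a,b]$ the times at which the curve visits these three vertices.

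Next I would apply the locality bound to the pair $\{t_A,t_B\}$. The inequality
\[
 |p(t_A)-p(t_B)|^2\le 4\,|t_A-t_B|
\]
reads $4L\le 4|t_A-t_B|$, so $|t_A-t_B|\ge L$. Since both times lie in $[a,b]$ we also have $|t_A-t_B|\le L$, so the bound is tight and $\{t_A,t_B\}=\{a,b\}$. Without loss of generality set $t_A=a$ and $t_B=b$; then $t_C$ lies strictly between them.

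Then I would repeat the argument for the two legs. From locality applied to $(A,C)$ and $(B,C)$ we get
\[
  2L\le 4(t_C-a),\qquad 2L\le 4(b-t_C),
\]
hence $t_C-a\ge L/2$ and $b-t_C\ge L/2$. Adding gives $b-a\ge L$, which is already equality, so both inequalities are tight and $t_C=(a+b)/2$, as required.

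There is no real obstacle: the content of the lemma is that all three locality inequalities are simultaneously sharp, and this follows from a length budget that is exactly saturated. The only point worth being careful about is the identification of the right-angled vertex from the side lengths $\sqrt{2L},\sqrt{2L},2\sqrt L$, which matches Lemma~\ref{treug} and rules out any other labelling of the vertices.
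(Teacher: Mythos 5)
Your proof is correct and follows essentially the same route as the paper: apply the locality bound to the hypotenuse pair to force $t_A,t_B$ to be the endpoints of $[a,b]$, then apply it to the two legs and use the saturated length budget to pin $t_C$ at the midpoint. No gaps; the concluding remark about vertex labelling is not even needed, since the lemma's hypothesis already names $C$ as the right-angled vertex.
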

\begin{proof}
Since the area of the triangle $ABC$ is equal to $b-a$, its cathets are equal to $\sqrt{2(b-a)}$, and the hypotenuse is
equal to $2\sqrt{b-a}$. Let $t_A$,$t_B$, and $t_C$ denote the points of passing the vertices of the triangle.
Then by virtue of the condition on the locality of the curve
$|t_B-t_A|\ge \frac14|B-A|^2=b-a$, where $|t_A-t_B|=b-a$, which is possible only if the moments $t_A$ and $t_B$ coincide with the ends of the segment.
 Further, the inequalities $|t_A-t_C| \ge \frac14|C-A|^2=\frac12(b-a)=\frac14|B-A|^2\le |t_B-t_C|$ entail equality
distances from $t_C$ to both ends of the segment.
\end{proof}

\ \begin{wrapfigure}[8]{r}{90pt}

\begin{tikzpicture}[scale=0.9,>=latex]

\filldraw[red](1.7,-0.5) node[scale=0.7,black]{};
 %\draw[line width=0.5pt,dash pattern=on 1pt off 3pt] (0,0) rectangle(3,3);

 \draw[line width=0.5pt,dash pattern=on 1pt off 3pt](0,0)--(0,3);
 \draw[line width=0.5pt,dash pattern=on 1pt off 3pt](0,3)--(3,3);
 \draw[line width=0.5pt,dash pattern=on 1pt off 3pt](3,3)--(3,0);
\draw[line width=0.5pt] (0,0)--(3,0);

\draw[line width=0.5pt,dash pattern=on 1pt off 3pt](1.5,1.5)--(3,3);
\draw[line width=0.5pt,dash pattern=on 1pt off 3pt](1.5,1.5)--(0,3);

 \draw[] (3,3) arc [start angle=0, end angle=-180,radius=1.5];
 \draw[] (0,0) arc [start angle=-90, end angle=90,radius=1.5];
 \draw[] (3,3) arc [start angle=-270, end angle=-90,radius=1.5];

  \draw[line width=0.5pt](0,0)--(1.5,1.5);
  \draw[line width=0.5pt](3,0)--(1.5,1.5);

 \filldraw[](0,3.2) node[scale=0.7]{$A^\prime$};
 \filldraw[](3,3.2) node[scale=0.7]{$C^\prime$};

 \filldraw[](0,-0.2) node[scale=0.7]{A};
 \filldraw[](3,-0.2) node[scale=0.7]{C};
 \filldraw[](1.6,1.3) node[scale=0.6]{B};

\end{tikzpicture}
\end{wrapfigure}

\begin{proof}[Proof of the theorem.]

Let a triangle with the sides $a\le b \le c$ of the unit area
is an image of the $p(t)$ curve with locality 4. Let's show that in this
in this case, the triangle is rectangular and isosceles. The condition for locality 4 gives inequalities $a^2+b^2\le 4\ge c^2$. So on
based on the \ref{treug} Lemma, we conclude that the
the triangle is isosceles and rectangular with a hypotenuse equal to 2.

It follows from the Lemma \ref{median} that the ends of the curve $p(t)$ coincide with sharp-angled vertices
triangle image. Consider the Serpinsky-Knopp curve $s(t)$  with the same image as $p(t)$,
 the beginning of which $s(0)$ coincides with $p(0)$. In this case, the same Lemma implies that
 $p(1)=s(1)$.  We will prove by induction on $n$ that
  \begin{equation}\label{2rat}
  p\(\frac{k}{2^n}\)=s\(\frac{k}{2^n}\)
  \end{equation}
 for all binary-rational points of a unit segment.
For $n=0$, our statement is proved above.
Assume that the equality \eqref{2rat} is true for all $k\le2^n$.
To prove the induction step, it is sufficient to make sure that
\begin{equation}\label{1rat}
  p\(\frac{2k+1}{2^{n+1}}\)=s\(\frac{2k+1}{2^{n+1}}\)\end{equation}
when $k<2^n$.
Consider the $ABC$ fraction of the $s(t)$ curve of order $n$, So that
$A=p(\frac{k}{2^n})$ and $C=p(\frac{k+1}{2^n}$ are its sharp-angled vertices, and
$B=s(\frac{2k+1}{2^{n+1}})$ --- rectangular.
It follows from the  Lemma \ref{median} that if
\begin{equation}\label{inclusion}
 B\in p\left[\frac{k}{2^n}, \frac{k+1}{2^n}\right],
\end{equation}
  then $B=p(\frac{2k+1}{2^{n+1}})$.
Therefore, for a complete proof of \eqref{2rat}, it is sufficient to prove

the inclusion \eqref{inclusion}. Consider all fractions of the$n $ -th order of the curve $s(t)$,
containing $B$. There are no more than four of them.
All of them are contained in a square with the center $B$ and the side $AC$. Let's denote the vertices of this square
$A'$ and $C ' $ (see figure 2). Others are located at a distance not less than the length of $AC$.
Therefore, $p$ - images of their fractal periods, coinciding at the ends with $s(t)$.
it is too far from $B$ (at least the length of $AC$) to enable it.
Therefore, $B$ can belong to
images of only the following four curve segments of type $p[\frac{i}{2^n},\frac{i+1}{2^n}]$:
these are segments of the curve with ends in $AC$, $AA'$, $A'C'$, $CC'$. According to the Lemma \ref{ugol}
these curve segments are contained in circles with diameters $AC$, $AA'$, $A'C'$, $CC'$
respectively. But the point $B$ is not internal to the union of three circles with diameters $AA'$, $A'C'$, $CC'$.
Therefore, there are points arbitrarily close to $B$
belonging to $p[\frac{k}{2^n}, \frac{k+1}{2^n}]$. Due to the closedness of the latter
we conclude that $B$ itself belongs to it.
Thus, it is established that the considered curve coincides with the Serpinsky-Knopp curve
in all binary-rational points. From the continuity of the curves, their complete coincidence follows.
\end{proof}


\begin{thebibliography}{99}

\bibitem{Sagan} {\bf Hans Sagan}, \newblock  "Space-Filling Curves.",
Universitext. Springer, 1994.

\bibitem{Sch2004}
{\bf Shchepin~E.}, “On fractal Peano Curves”,  Proc. Steklov Inst. Math., {\bf247} (2004), 272–280

\bibitem{Hav2}
{\bf Haverkort~H., F.\,van\,Walderveen}, Locality and bounding-box quality of two dimensional space-filling curves. Computational Geometry, Theory and Applications 43(2) (2010) 131–147.

\bibitem{Nie}
{\bf Niedermeier~R., Reinhardt~K., Sanders~P.}, Towards optimal locality in mesh-indexings, Discrete Applied Mathematics 117 (2002) 211–237.

\bibitem{Sch2015}
{\bf Shchepin~E.}, “Attainment of Maximum Cube-to-Linear Ratio
for Three-Dimensional Peano Curves”, Mathematical Notes, 2015, Vol. 98, No. 6, pp. 971–976.

\end{thebibliography}
\end{document}